\theoremstyle{definition}
\newtheorem{mydef}{Definition}
\newtheorem{lem}[mydef]{Lemma}
\newtheorem{thm}[mydef]{Theorem}
\newtheorem{cor}[mydef]{Corollary}
\newtheorem{prop}[mydef]{Proposition}
\newtheorem{defin}[mydef]{Definition}
\newtheorem{example}[mydef]{Example}
\newtheorem{remark}[mydef]{Remark}
\newtheorem{fact}[mydef]{Fact}
\newcommand{\Hilb}{\operatorname{\mathbf{Hilb}}}
\newcommand{\Hilbr}{\Hilb_{m}}
\newcommand{\Set}{\operatorname{\bf Set}}
\newcommand{\Met}{\operatorname{\bf Met}}
\newcommand{\Ban}{\operatorname{\bf Ban}}
\newcommand{\Ab}{\operatorname{\bf Ab}}
\newcommand{\khdual}{\operatorname{\mathbf{KH}}^{op}}
\newcommand{\khorddual}{\operatorname{\mathbf{KH}}_\leq^{op}}
\newcommand{\CAlg}{\operatorname{\textbf{C}\textbf{C}^{\ast}\textbf{Alg}}}
\newcommand{\Mod}{\operatorname{\bf Mod}}
\newcommand{\Ll}{\mathbb{L}}
\newcommand{\ck}{\mathcal{K}}
\newcommand{\cl}{\mathcal{L}}
\author[Lieberman]{Michael Lieberman}
\email{qmlieberman@vutbr.cz}
\urladdr{https://math.fme.vutbr.cz/Home/lieberman}
\address{Institute of Mathematics, Faculty of Mechanical Engineering, Brno University of Technology, Brno, Czech Republic}
\author[Rosick\'y]{Ji\v r\'i Rosick\'y}
\email{rosicky@math.muni.cz}
\urladdr{http://www.math.muni.cz/\textasciitilde rosicky/}
\address{Department of Mathematics and Statistics, Faculty of Science, Masaryk University, Brno, Czech Republic}
\thanks{The second author is supported by the Grant agency of the Czech republic under the grant 19-00902S}
\author[Vasey]{Sebastien Vasey}
\title{Hilbert spaces and $C^\ast$-algebras are not finitely concrete}
\date{\today \\
AMS 2010 Subject Classification: Primary 18C35. Secondary: 46L05, 46M99.}
\keywords{Hilbert space, $C^\ast$-algebra, faithful functor preserving directed colimits}
\begin{document}
\begin{abstract}
  We show that no faithful functor from the category of Hilbert spaces with injective linear contractions into the category of sets preserves directed colimits. Thus Hilbert spaces cannot form an abstract elementary class, even up to change of language. We deduce an analogous result for the category of commutative unital $C^\ast$-algebras with $\ast$-homomorphisms. This implies, in particular, that this category is not axiomatizable by a first-order theory, a strengthening of a conjecture of Bankston.
\end{abstract}

\maketitle

\section{Introduction}

In what follows, we prove that a number of categories arising in functional analysis---in particular, Hilbert spaces, Banach spaces, commutative $C^\ast$-algebras, and the dual of the category of compact Hausdorff spaces---are essentially nonaxiomatizable using the tools of discrete (as opposed to \emph{continuous}) model theory, be it classical, infinitary, or abstract.  To be precise, we show that nonaxiomatizability is intrinsic to these categories, in the sense that it holds even up to arbitrary changes in the signature used (or, equivalently, in the choice of underlying sets).  This is accomplished by an entirely novel approach, pioneered in \cite{henry-aec-uncountable-v1} and fully developed here, in which this question is analysed via the failure of \emph{finite concreteness}; that is, the nonexistence of directed colimit preserving underlying set functors.  As we will see, this reduces proofs of nonaxiomatizability (in the strong sense described above) to straightforward counting arguments.  We stress that this method applies to classes beyond those considered here, and is of independent interest.

We will introduce finite concreteness in detail in Section~\ref{secfconcr}: for now, we remark simply that it is an important measure of the concreteness of an abstract category.  In particular, a category $\ck$ is finitely concrete if there exists a forgetful functor $U:\ck\to\Set$ that preserves directed colimits (otherwise known as \emph{direct limits}).  That is, there is an \emph{underlying set functor} $U$ with the property that the underlying set $UX$ of the colimit $X$ of a directed system $\langle X_i\,|\,i\in I\rangle$ in $\ck$ is precisely the colimit of the underlying sets $UX_i$.  In other words, with respect to $U$, colimits in $\ck$ are $\Set$-like.  In algebra, of course, we are accustomed to working with categories of structured sets, in which case there is an obvious choice of $U$, and this $U$ will, more often than not, witness finite concreteness of the category.  In the case of structures arising in functional analysis---take $\Hilbr$, the category of Hilbert spaces and injective linear contractions, for example---there is also an obvious choice of underlying set functor $U$, but this functor will not preserve directed colimits: in general, the underlying set of a directed colimit of complete metric spaces will be the \emph{completion} of the union of the corresponding underlying sets.  

It is natural to ask whether this is a problem intrinsic to the category $\Hilbr$, or whether the situation can be resolved by a clever choice of an alternative functor $U$.\footnote{We note that a change of $U$ amounts to a change in the signature, see e.g. \cite[3.5]{ct-accessible-jsl}.} That is, we consider whether $\Hilbr$ is \emph{finitely concrete}, in the sense that there exists \emph{some} functor $U$ that, like the usual forgetful functor, is faithful---for any linear isometries $f,g$, $Uf=Ug$ only if $f=g$---and succeeds where the usual forgetful functor fails; that is, it preserves directed colimits from $\Hilbr$ into $\Set$.  

Although obvious in retrospect, the connection of this idea to axiomatizability was been made only recently, in \cite{henry-aec-uncountable-v1}. The key observation is that categories axiomatizable in tractable (discrete) logics---finitary first-order logic or, indeed, infinitary logics of the form $\Ll_{\kappa,\omega}$, where we permit conjunctions and disjunctions of fewer than $\kappa$ formulas---are all finitely concrete.  Moreover, \emph{abstract elementary classes} (or \emph{AECs}), which are the central focus of abstract (but still discrete) model theory, are finitely concrete as well.  This gives a clear method of testing for axiomatizability in the above senses: if an abstract category can be shown not to be finitely concrete, it is not susceptible to any of these logical treatments.

As it happens, we prove here that $\Hilbr$ is not finitely concrete, via a very simple counting argument.  As suggested above, this implies that not only is $\Hilbr$ not an AEC with respect to the usual underlying set functor---which is clear from the failure of the union of chains axiom---this problem is essential: $\Hilbr$ is \emph{not} equivalent to an AEC, or a category axiomatizable in $\Ll_{\kappa,\omega}$ for any $\kappa$, and thus, clearly, not axiomatizable in finitary first-order logic.  This answers the open question of \cite[5.10]{multipres-pams}. Moreover, by embedding $\Hilbr$ into various categories, we can deduce more examples of non-finite concreteness (and thus nonaxiomatizability) with no additional effort. In particular, we show that the category $\CAlg$ of commutative unital $C^\ast$-algebras with (unit-preserving) $\ast$-homomorphisms is not finitely concrete (Theorem \ref{cast-thm}), hence, in particular, not elementary. This answers a question of \cite{rosicky-elementary} and, more recently, \cite[1.5]{marra-reggio}.

The same is true, naturally, of the equivalent category $\khdual$, the dual of the category of compact Hausdorff spaces $\khdual$. The latter result significantly strengthens the a longstanding conjecture of Bankston (\cite{bankston, bankston-up}) that $\khdual$ is not $P$-elementary; that is, it not equivalent to the closure under products of the category of models of a first order theory (this original conjecture was solved independently by Banaschewski \cite{banaschewski} and the second author \cite{rosicky-elementary}).  We note in passing that our nonaxiomatizability result also passes to the category of compact ordered spaces, $\khorddual$, considered in, e.g. \cite{abreg}.

These results should be measured against the various (partial) positive axiomatizability results in the literature: \cite{abbad} and \cite{abreg} realize $\khorddual$ as $\aleph_1$-ary varieties, and \cite{boney-pres-metric-mlq, ackerman-pres} each give near-equivalences between continuous classes such as $\Hilbr$ and AECs.  They may also be taken as a testament to the usefulness---indeed, the necessity---of \emph{continuous logic} and related approaches in the analysis of these classes (see, for example, \cite{changk} and the more recent \cite{byu} and \cite{bebyhu}).  Such approaches lack the difficulties outlined here, and allow a great deal of model-theoretic machinery to be brought to bear: see, for example, the sequence of papers \cite{fone}, \cite{ftwo}, and \cite{fthree} on the model theory of operator algebras.

The authors are extremely grateful to James Hanson and the anonymous referee for a very valuable exchange of ideas in connection with the proof of Lemma \ref{inter-lem}.

\section{Finite concreteness}\label{secfconcr}

We assume a basic familiarity with category theory (as presented, for example, in \cite{joy-of-cats}).  Familiarity with accessible categories (see \cite{adamek-rosicky}) and their applications in model theory (\cite{bfo-accessible-v1}, for example) would be useful, but is not essential.

We recall in passing that a \emph{concrete category} consists of a category $\ck$ equipped with a faithful functor $U:\ck\to\Set$.  We refine this notion in the following way:

\begin{defin}\label{deffconcr}
	Let $\ck$ be a category.  We say that $\ck$ is \emph{finitely concrete} if there is a faithful functor $U:\ck\to\Set$ that preserves directed colimits.
\end{defin}

As a simple example, we note:
\begin{example}
	The category of abelian groups and injective homormorphisms, $\Ab_m$, is finitely concrete, a property which is witnessed by the standard forgetful functor $U:\Ab_m\to\Set$.  Given a directed system $\{G_i\,|\,i\in I\}$---we suppress the mappings, for simplicity---with colimit $G$ in $\Ab_m$, $U(G)$ is simply the directed union of the $U(G_i)$, which is precisely the directed colimit in $\Set$.  So $U$ does indeed preserve directed colimits.  The same holds for many familiar algebraic categories.
\end{example} 

As our template for categories susceptible to analysis through discrete logic, we take a very general notion:

\begin{defin}\label{defaecat}
	Let $\ck$ be an abstract category.  Following \cite[5.3]{beke-rosicky}, we say that $\ck$ is an abstract elementary category if it has the following properties:
	\begin{enumerate}
		\item $\ck$ is accessible and has all directed colimits, and
		\item $\ck$ admits an embedding $F:\ck\to\cl$, $\cl$ finitely presentable, such that
		\begin{enumerate}
		\item $F$ is iso-full: any isomorphism $f:FA\to FB$ in $\cl$ is $F(\bar{f})$ for some $\bar{f}:A\to B$ in $\ck$.
		\item $F$ is coherent: For any commutative triangle 
		$$\xymatrix{FA\ar[rr]^{H(h)}\ar[dr]_f & & FC\\
		 & FB\ar[ur]_{H(g)} & }$$
		 there is $\bar{f}:A\to B$ in $\ck$ with $F(\bar{f})=f$.
		\item $F$ preserves directed colimits.
		\end{enumerate}
	\end{enumerate}
\end{defin}

In a moment, we will connect this to more familiar model-like categories.  For now, we note:

\begin{lem}\label{lemaecatfc}
	Any abstract elementary category is finitely concrete.
\end{lem}

\begin{proof}
	Let $\ck$ be an abstract elementary category.  We know that $\ck$ admits a functor $F:\ck\to\cl$ that preserves directed colimits and, as an embedding of categories, is faithful.  Being finitely accessible, $\cl$ admits a faithful, directed colimit-preserving functor $G:\cl\to\Set$.  The composition $GF$ witnesses finite concreteness of $\ck$.
\end{proof}

As their name suggests, abstract elementary categories are closely related to the abstract elementary classes (AECs) of Shelah, and thus generalize a number of familiar categories arising from logical considerations.

\begin{remark}\label{rmkmodcats}
\begin{enumerate}
		\item\label{defmodcatsaec} Upon parsing the definitions, it is clear that any AEC forms an abstract elementary category.  Indeed, AECs are precisely the abstract elementary categories in which all morphisms are monomorphisms (see \cite[5.7]{beke-rosicky}).
		\item\label{defmodcatsinf} Similarly, following \cite{makkai-pare}, we say that a category $\ck$ is \emph{$\Ll_{\infty,\omega}$-elementary} if it is equivalent to the category of models of a fragment of the infinitary logic $\Ll_{\kappa,\omega}$, for some infinite cardinal $\kappa$.
		\item\label{defmodcatselem} We say that a category $\ck$ is \emph{elementary} if it is equivalent to one of the form $\Mod(T)$, whose objects are models of a finitary first-order theory $T$, and whose morphisms are all $\Ll(T)$-structure homomorphisms.  
\end{enumerate}
\end{remark}

All of the above categories are finitely concrete, by Lemma~\ref{lemaecatfc}.  This was already manifestly the case, of course: they each come equipped with a natural underlying set functor that witnesses their finite concreteness.

\begin{cor}\label{corfconcrax}
	If a category $\ck$ is not finitely concrete, it is not equivalent to an abstract elmentary category (nor to an AEC, an $\Ll_{\infty,\omega}$-elementary category, or an elementary category).
\end{cor}

\begin{prop}\label{propfconcrtrans} Let $F:\ck^{\ast}\to\ck$ be a faithful functor that preserves directed colimits.  If $\ck$ is finitely concrete, so is $\ck^{\ast}$.\end{prop}
\begin{proof}
	Let $U:\ck\to\Set$ be a faithful, directed-colimit-preserving functor, the existence of which follows from finite concreteness of $\ck$.  The composition $FU:\ck^{\ast}\to\Set$ will also be faithful and preserve directed colimits, witnessing finite concreteness of $\ck^{\ast}$.
\end{proof}

Of greater use to us will be the contrapositive of Proposition~\ref{propfconcrtrans}: if a category $\ck^{\ast}$ admits a faithful, directed-colimit-preserving functor into a \emph{non-finitely concrete} category $\ck$, then $\ck^{\ast}$ is not finitely concrete, either.  This will play an essential role in Section~\ref{secrest}, where we take advantage of the interembeddability of $\Hilbr$, $\Ban$ (the category of Banach spaces), $\CAlg$, and $\khdual$.

\section{Hilbert spaces}\label{sechilb}

Henceforth, we define $\Hilb$ to be the category whose objects are complex Hilbert spaces and whose morphisms are linear contractions. Note that injective linear contractions are exactly the \emph{monomorphisms} in $\Hilb$. Thus we let $\Hilbr$ denote the subcategory of $\Hilb$ with the same objects, but with injective linear contractions as morphisms. We observe in passing that $\Hilbr$ is an $\aleph_1$-accessible category with directed colimits.

A linear mapping $A\to B$ of Hilbert spaces is a contraction if and only if
$(a,a)\geq (fa,fa)$ for every $a\in A$, where $(-,-)$ denotes the inner product. Linear isometries (equivalently orthogonal operators) are linear mappings preserving the inner product.
 
Our goal is to show that $\Hilbr$ is not finitely concrete; that is, there is no faithful, directed-colimit-preserving functor $U:\Hilbr\to\Set$.  Suppose, to the contrary, that there \emph{is} such a functor $U$.

The following definition follows \cite[4.3]{henry-aec-uncountable-v1}.

\begin{defin}
   Let $A$ be a Hilbert space, and let $x \in U A$. We say that $x$ is \emph{supported} on a subspace $A_0$ of $A$ if whenever $f, g: A \to B$
are injective linear contractions such that $f i_{A_0, A} = g i_{A_0, A}$ (where $i_{A_0, A}$ denotes the inclusion map $A_0\to A$), then $f (x) = g (x)$. When $A$ is clear from context, we omit it.
\end{defin}

Note that, as is standard, we have abused notation and written $f (x)$ instead of $(U f)(x)$. The next observation will be used repeatedly:

\begin{remark}\label{supp-rmk}
  If $A_0$ is a subspace of a Hilbert space $A$ and $x_0 \in U A_0$, then $i_{A_0, A} (x_0)$ is supported on $A_0$. 
\end{remark}

\begin{lem}\label{fd-lem}
  Let $A$ be a Hilbert space, and let $x \in U A$. Then $x$ is supported on some finite dimensional subspace of $A$.
\end{lem}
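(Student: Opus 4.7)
The plan is to express $A$ as a directed colimit in $\Hilbr$ of its finite-dimensional subspaces and then invoke the hypothesis that $U$ preserves directed colimits, so that every element of $U A$ becomes visible at some finite-dimensional stage, at which point Remark \ref{supp-rmk} gives the desired support.

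First, I would verify that $A$ is in fact the colimit in $\Hilbr$ of the directed system $(A_0)_{A_0 \in \mathcal{F}}$, where $\mathcal{F}$ is the collection of finite-dimensional subspaces of $A$ ordered by inclusion (and the transition maps are the inclusions $i_{A_0, A_0'}$). The family $\mathcal{F}$ is indeed directed, since for $A_0, A_1 \in \mathcal{F}$ the sum $A_0 + A_1$ still lies in $\mathcal{F}$. The crucial point is set-theoretic: every $a \in A$ lies in $\operatorname{span}\{a\}$, so $\bigcup_{A_0 \in \mathcal{F}} A_0 = A$, and no Hilbert-space completion is required to form the colimit. Given any cocone $(f_{A_0}: A_0 \to B)_{A_0 \in \mathcal{F}}$ of linear isometries into a Hilbert space $B$, the compatibility condition forces the values of any factoring map $f: A \to B$, and the common restrictions glue to a well-defined linear isometry: linearity of $f$ can be checked on arbitrary finite families of vectors (each contained in a common $A_0$), and inner-product preservation transfers similarly.

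Now, since $U$ preserves directed colimits by hypothesis, $U A$ is the directed colimit in $\Set$ of $(U A_0)_{A_0 \in \mathcal{F}}$ along the maps $U(i_{A_0, A_0'})$. The standard explicit description of directed colimits in $\Set$ then shows that every $x \in U A$ is of the form $i_{A_0, A}(x_0)$ for some $A_0 \in \mathcal{F}$ and some $x_0 \in U A_0$. By Remark \ref{supp-rmk}, such an $x$ is supported on $A_0$, which is finite-dimensional, as required.

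I do not anticipate a serious obstacle; the argument is a direct exploitation of the preservation hypothesis. The only point requiring a moment's care is the verification that the colimit in $\Hilbr$ coincides with $A$ itself, which rests on the trivial but essential observation that the finite-dimensional subspaces already exhaust $A$ set-theoretically, so that one does not have to worry about a subtle passage to a Hilbert-space completion obscuring the identification.
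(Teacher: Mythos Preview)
Your argument is correct and follows exactly the same approach as the paper: write $A$ as the directed colimit of its finite-dimensional subspaces, apply preservation of directed colimits by $U$, use the explicit form of directed colimits in $\Set$, and finish with Remark~\ref{supp-rmk}. The only difference is that you spell out in more detail why the colimit in $\Hilbr$ of the finite-dimensional subspaces really is $A$ (no completion needed), which the paper leaves implicit.
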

\begin{proof}
  $A$ is a directed colimit of its finite-dimensional subspaces. Since $U$ preserves directed colimits, $U A$ is a directed colimits of sets of the form $U A_0$, for $A_0$ a finite-dimensional subspace of $A$. So $x = i_{A_0, A} (x_0)$ for some finite-dimensional $A_0$ and some $x_0 \in U A_0$. Now use Remark \ref{supp-rmk}.
\end{proof}

We now prove that finite-dimensional supports are closed under intersections. This is crucial and not so obvious, since there is no assumption that the concrete functor preserves pullbacks (which are intersections here). The proof follows \cite[4.7]{henry-aec-uncountable-v1}. We will need the following:  

\begin{lem}\label{supp-cond}
Let $A$ be a Hilbert space and $0<\delta\leq 1$. Then $x\in UA$ is supported on $A_0$ if and only if whenever $f,g:A\to B$ are injective
linear maps of norm $\leq\delta$ such that $f i_{A_0, A} = g i_{A_0, A}$, then $f (x) = g (x)$. 
\end{lem}
\begin{proof}
Assume the condition from the statement and consider injective linear contractions $f,g:A\to B$ such that $f i_{A_0, A} = g i_{A_0, A}$. Then
$\parallel \delta f\parallel,\parallel\delta g\parallel\leq\delta$ and
$\delta f i_{A_0, A} = \delta g i_{A_0, A}$. Thus 
$\delta f(x)=\delta g(x)$, hence $f(x)=g(x)$.
\end{proof}

\begin{lem}[\cite{GK} Lemma 2.1]\label{contr}
Let $e_1,\dots,e_n$ be a basis of a finite-dimensional Banach space $A$.
There is $\delta>0$ such that for every Banach space $B$ and every linear map $f:A\to B$ the following implication holds:
$$
\max_{i=1,\dots,n}\parallel f(e_i)\parallel\leq\delta\Rightarrow \parallel f\parallel\leq 1.
$$
\end{lem}

\begin{remark}\label{sum}
Let $C$ be a closed subspace of a Hilbert space $A$ and $C^\perp$ its orthogonal complement. Then $A=C\oplus C^\perp$ (see \cite[1.4.6]{BS}).
Let $f_1:C\to B$ and $f_2:C^\perp\to B$ be injective linear contractions 
such that $(f_1x,f_2y)=0$ for every $x\in C$ and $y\in C^\perp$. 
Let $f:A\to B$ be the induced linear map, i.e., $f(x+y)=f_1(x)+f_2(y)$. Then $f$ is an injective linear contraction.

Indeed,
$$
(x+y,x+y)=(x,x)+(y,y)\geq (f_1x,f_1x)+(f_2y,f_2y)=
(f(x+y),f(x+y)).
$$
Hence $f$ is a linear contraction. Assume that $f(x+y)=0$. Then
$f_1(x)=-f_2(y)$, hence
$$
(f_1x,f_1x)=(f_1x,-f_2y)=0.
$$
Therefore, $f_1(x)=0$, i.e., $x=0$. Then $f_2(y)=0$, hence $y=0$. Consequently, $x+y=0$.
\end{remark}

\begin{lem}\label{inter-lem}
Let $A$ be an infinite-dimensional Hilbert space, let $x \in U A$ and let $A_0, A_1$ be finite-dimensional subspaces of $A$. If $x$ is supported on both $A_0$ and $A_1$, then $x$ is supported on $A_0 \cap A_1$.
\end{lem}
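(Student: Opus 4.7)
Following~\cite[4.7]{henry-aec-uncountable-v1}, my plan is to show that any morphisms $f, g \colon A \to B$ in $\Hilbr$ satisfying $f \circ i_{A_{01}, A} = g \circ i_{A_{01}, A}$ (where $A_{01} := A_0 \cap A_1$) must have $f(x) = g(x)$.

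The core construction will be an amalgam: a Hilbert space $C$ equipped with isometries $j \colon B \to C$ and $h \colon A \to C$ such that $h$ agrees with $j \circ f$ on $A_0$ and with $j \circ g$ on $A_1$. Once this is in place, applying the hypothesis that $x$ is supported on $A_0$ to the parallel morphisms $h, j \circ f \colon A \to C$ will yield $h(x) = j(f(x))$ in $UC$; symmetrically, support on $A_1$ will give $h(x) = j(g(x))$. Combining these identities forces $j(f(x)) = j(g(x))$ in $UC$, reducing the problem to showing that this implies $f(x) = g(x)$ in $UB$.

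To build the amalgam, I will exploit that $f|_{A_0} \colon A_0 \to B$ and $g|_{A_1} \colon A_1 \to B$ are isometric embeddings agreeing on $A_{01}$, so one can form a Hilbert space $\bar C$ containing isometric copies of $B$ and $A_0 + A_1$ in which the image of $A_0$ from the second copy coincides with $f(A_0)$ from the first (and similarly for $A_1$). The naive linear map $A_0 + A_1 \to B$ given by $a_0 + a_1 \mapsto f(a_0) + g(a_1)$ is generally not isometric, so realizing this identification as a true amalgam requires adding extra dimensions to $\bar C$ to absorb the resulting norm defect. Then I will extend the isometry $A_0 + A_1 \to \bar C$ to an isometry $h \colon A \to C$ by setting $C := \bar C \oplus (A \ominus (A_0 + A_1))$ and sending the orthogonal complement of $A_0 + A_1$ in $A$ identically to the added summand; it is here that the infinite-dimensionality of $A$ is used.

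The main obstacle, which I expect to be the crux of the argument, is the final step of deducing $f(x) = g(x)$ in $UB$ from $j(f(x)) = j(g(x))$ in $UC$. Since $U$ is only assumed to be faithful---not necessarily injective on individual morphisms---there is no immediate reason that $Uj$ is injective on $\{f(x), g(x)\}$. Overcoming this will likely require either choosing the amalgam so that $j$ is in some suitable sense split (so that $Uj$ becomes injective), or an iterative argument exploiting the preservation of directed colimits by $U$, mirroring the delicate point in Henry's original proof.
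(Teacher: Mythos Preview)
Your plan has a genuine gap at exactly the point you flag: passing from $Uj(f(x)) = Uj(g(x))$ in $UC$ back to $f(x) = g(x)$ in $UB$. Neither of your proposed escape routes works. In $\Hilbr$ every morphism is an isometry, so a left inverse of $j$ would itself be an isometry, forcing $j$ to be an isomorphism; thus $j$ cannot be nontrivially split, and $Uj$ has no reason to be injective (faithful functors reflect monomorphisms but need not preserve them). The directed-colimit idea is too vague to carry weight here, and in fact the paper's argument, which \emph{is} the adaptation of Henry's 4.7, uses no such step.

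The paper sidesteps the whole issue by never leaving $B$. Since $A$ is infinite-dimensional and $f \colon A \to B$ is injective, $B$ is infinite-dimensional too, so there is an infinite-dimensional subspace $S \subseteq B$ orthogonal to $f[A_0 + A_1]$. All the auxiliary isometries are then built as maps $A \to B$: one constructs $h \colon A \to B$ sending $A_0$ and $A_1$ into the span of $S$ and $f[A_0 \cap A_1]$, together with intermediaries $h_\ell' \colon A \to B$ that agree with $f$ on $A_{1-\ell}$ and with $h$ on $A_\ell$. This yields a finite chain $f \sim^\ast h_0' \sim^\ast h \sim^\ast h_1' \sim^\ast \cdots$ (and symmetrically for $g$), where $\sim^\ast$ means agreement on $A_0$ or on $A_1$; along any such chain $x$ is preserved, so $f(x) = g(x)$ directly in $UB$. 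The moral: use the extra room already present in $B$ rather than enlarging to a $C$ you then cannot get back from.
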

\begin{proof}
 Fix a Hilbert space $B$. We can assume that $\dim A\leq\dim B$ because, otherwise, there is no injective linear map $A\to B$. For injective linear contractions $f, g: A \to B$, we write $f \sim^\ast g$ if either 
$f i_{A_0, A} = g i_{A_0, A}$ or $f i_{A_1, A} = g i_{A_1, A}$. This is usually not an equivalence relation, so let $\sim$ be its transitive closure. Observe that since $x$ is supported on both $A_0$ and $A_1$, we have that $f \sim^\ast g$ implies $f (x) = g (x)$, hence also $f \sim g$ implies $f (x) = g (x)$.

Fix orthonormal bases $\mathcal{B}_0$ and $\mathcal{B}_1$ for $A_0$ and $A_1$, respectively, so that $\mathcal{B}_0 \cap \mathcal{B}_1$ is a basis for $A_0 \cap A_1$. Let $A_2=\langle A_0\cup A_1\rangle$. Since $A_2$ is a closed subspace of $A$ (\cite[1.2.7]{BS}, $A=A_2\oplus A_2^\perp$. Let $\mathcal{C}$ be an orthonormal base of $A_2^\perp$ in $A$.
Take $0<\delta\leq 1$ from Lemma \ref{contr} for $A_2$. Let $f, g: A \to B$ be injective linear maps of norm $\leq\delta$ given so that $f i_{A_0 \cap A_1, A} = g i_{A_0 \cap A_1, A}$. We will find an injective linear contraction $h: A \to B$ so that $f \sim h$ and $g \sim h$, which will imply that $f \sim g$, and hence that $f(x) = g(x)$, by the observation above. Then, following Lemma \ref{supp-cond}, $x$ is supported on $A_0\cap A_1$.

Let $S$ be the orthogonal complement of the space 
$\langle f[A_0] \cup g[A_1]\rangle$ generated by $f[A_0] \cup g[A_1]$
in $B$. Since $\dim A\leq\dim S$, there is a linear isometry 
$s:A\to S$. Hence $\parallel\delta s\parallel\leq\delta$. Let $h:A\to B$ be a linear map which restricts to $f$ (hence also $g$) on 
$\mathcal{B}_0\cap\mathcal B_1$ and restricts to $\delta s$ on $(\mathcal{B}_0\cup\mathcal{B}_1)\setminus(\mathcal{B}_0\cap\mathcal{B}_1)\cup\mathcal{C}$. To see that $h$ is an injective contraction, we apply Remark \ref{sum}: the restriction of $h$ on  $A_0\cap A_1$ is $f$, the restriction of $h$ on $(A_0\cap A_1)^\perp$ is $\delta s$, and 
$(hx,hy)=0$ for $x\in A_0\cap A_1$ and  $y\in(A_0\cap A_1)^\perp$.

We claim that $f \sim h \sim g$. We prove that $f \sim h$, and a symmetric argument will prove $g \sim h$.  Let $f':A\to B$ be a linear map which is $f$ on $\mathcal{B}_0$ and $h$ on $(\mathcal{B}_1\setminus\mathcal{B}_0)\cup\mathcal {C}$. Following Lemma \ref{contr}, the restriction of $f'$ on $A_2$ is a contraction. Assume that
$f'(x)=0$ for $x\in A_2$. Since $x=y+z$ where $y\in A_0$ and 
$z\in\langle \mathcal{B}_1\setminus\mathcal{B}_0\rangle$, we have
$f'(y)=-f'(z)$. Since $(f'y,f'z)=0$, we have $f'(y)=f'(z)=0$, hence $y=z=0$, i.e., $x=0$. Thus the restriction of $f'$ on $A_2$ is injective.
Since the restriction of $f'$ on $A_2^\perp$ coincides with that of $\delta s$, it is an injective contraction too. Since $(f'x,f'y)=0$ for
$x\in A_0$ and $y\in A_0^\perp$, Remark \ref{sum} implies that $f'$ is
an injective contraction. Observe that $f\sim f'\sim  h$ by definition, hence $f \sim h$, as desired.
\end{proof}

\begin{lem}\label{fd-lem-uq}
  For every Hilbert space $A$ and any $x \in U A$, there is a unique minimal finite-dimensional subspace $A_0$ of $A$ on which $x$ is supported.
\end{lem}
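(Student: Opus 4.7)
The plan is to combine existence (from Lemma \ref{fd-lem}) with the intersection closure property (Lemma \ref{inter-lem}) to show that a finite-dimensional support of least dimension is actually contained in every finite-dimensional support, and hence is the unique minimal one.

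For existence, Lemma \ref{fd-lem} guarantees that the collection of finite-dimensional subspaces of $A$ on which $x$ is supported is nonempty. By well-ordering of the natural numbers, we may pick such a subspace $A_0$ with $\dim A_0$ as small as possible.

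For the key containment step, let $A_1$ be any finite-dimensional subspace of $A$ on which $x$ is supported. Since $A$ is infinite-dimensional, Lemma \ref{inter-lem} applies and yields that $x$ is supported on $A_0 \cap A_1$. Because $A_0 \cap A_1$ is a subspace of $A_0$ on which $x$ is supported, the minimality of $\dim A_0$ forces $\dim(A_0 \cap A_1) = \dim A_0$, so $A_0 \cap A_1 = A_0$, i.e. $A_0 \subseteq A_1$.

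Thus $A_0$ is contained in every finite-dimensional subspace of $A$ supporting $x$. In particular, if $A_0'$ is any minimal such subspace, then $A_0 \subseteq A_0'$, and minimality of $A_0'$ gives $A_0 = A_0'$. Hence $A_0$ is the unique minimal finite-dimensional subspace on which $x$ is supported. The main (and only) subtle ingredient is Lemma \ref{inter-lem}, which has already been proved; the rest is a clean minimum-dimension argument.
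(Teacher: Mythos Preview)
Your proof is correct and is essentially the same argument the paper gives, just written out in full: the paper's one-line proof says to combine Lemmas~\ref{fd-lem} and~\ref{inter-lem} with the fact that a proper intersection of finite-dimensional subspaces drops dimension, which is exactly your minimum-dimension argument.
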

\begin{proof}
  Combine Lemmas \ref{fd-lem} and \ref{inter-lem} with the fact that a nontrivial intersection of two finite-dimensional subspaces must have lower dimension.
\end{proof}

\begin{defin}
  For any Hilbert space $A$ and $x \in U A$, we call the minimal subspace of $A$ on which $x$ is supported (given by Lemma \ref{fd-lem-uq}) the \emph{support of $x$ (in $A$)}. We say that this support is \emph{trivial} if it is the zero space, and \emph{nontrivial} otherwise.
\end{defin}

\begin{lem}\label{trivial-supp}
  For any nonzero subspace $A_0$ of an infinite-dimensional Hilbert space $A$, there is $x_0 \in U A_0$ such that $i_{A_0,A} (x_0)$ has nontrivial support in $A$.
\end{lem}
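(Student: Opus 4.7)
The plan is a proof by contradiction. Suppose, toward a contradiction, that for every $x_0 \in U A_0$ the element $i_{A_0, A}(x_0) \in U A$ has trivial support in $A$, i.e.\ is supported on the zero subspace $\{0\}$ of $A$. Since any two morphisms $f, g : A \to B$ in $\Hilbr$ are linear, they automatically agree on $\{0\}$, so the premise $f \circ i_{\{0\}, A} = g \circ i_{\{0\}, A}$ holds trivially. Therefore the hypothesis of trivial support of $i_{A_0, A}(x_0)$ yields $(U f)(i_{A_0, A}(x_0)) = (U g)(i_{A_0, A}(x_0))$ for \emph{every} pair of morphisms $f, g : A \to B$ and every $x_0 \in U A_0$.

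To derive a contradiction I would produce two specific different endomorphisms of $A$ whose restrictions to $A_0$ are then forced by the above to have equal images under $U$, violating faithfulness. The cleanest choice is $f = \mathrm{id}_A$ and $g = -\mathrm{id}_A$; both are linear isometries of $A$ since $\|-v\| = \|v\|$. Applying the previous paragraph together with functoriality of $U$ gives $U(f \circ i_{A_0, A}) = U(g \circ i_{A_0, A})$ as functions $U A_0 \to U A$. Since $U$ is faithful, this forces the actual morphisms to agree, i.e.\ $f \circ i_{A_0, A} = g \circ i_{A_0, A}$, which unravels to $i_{A_0, A} = -i_{A_0, A}$, that is, $v = -v$ for every $v \in A_0$. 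Picking any nonzero $v \in A_0$ (which exists because $A_0$ is nonzero) yields $v = 0$, a contradiction.

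The only conceptual step worth flagging as a ``main obstacle'' is recognizing how strong \emph{trivial} support really is: because the zero subspace imposes no constraint at all on pairs of morphisms $A \to B$, trivial support pins down the action of $U f$ on $i_{A_0, A}(x_0)$ uniformly in $f$. Once this is absorbed, the rest reduces to finding two morphisms out of $A$ whose restrictions to $A_0$ differ, and the scalar multiplication $-\mathrm{id}_A$ trivially supplies this — we do not even need to invoke the infinite-dimensionality of $A$ for this step (it is needed only to guarantee that the notion of support from Lemma \ref{fd-lem-uq} is available in the first place).
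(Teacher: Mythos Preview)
Your proof is correct and follows essentially the same approach as the paper's: assume every $i_{A_0,A}(x_0)$ is supported on the zero space, observe that any two morphisms out of $A$ then induce the same map on $i_{A_0,A}[UA_0]$, and derive a contradiction from faithfulness of $U$. The only difference is the choice of witnessing pair: the paper uses two embeddings of $A$ into $A\oplus A$ that separate $A_0$, whereas you use the endomorphisms $\mathrm{id}_A$ and $-\mathrm{id}_A$, which is a slightly cleaner choice.
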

\begin{proof}
  Suppose not; that is, suppose that there is a nonzero subspace $A_0$ of such an $A$ with the property that every $x_0\in U A_0$ has trivial support in $A$. Let $f,g: A \to B$ be any two morphisms such that $\dim A<\dim B$ and $f i_{A_0, A} \neq g i_{A_0, A}$.  In particular, we may take $B$ to be the direct sum of $A$ with a Hilbert space of higher dimension extending $A$. Let $f$ send $A$ to its copy in the left component of the direct sum and $g$ send $A$ to its copy in the right component.  We know that $f$ and $g$ agree on the zero space, on which every $x_0 \in U A_0$ is supported in $A$, under our current assumptions.  It follows that for all $x_0\in U A_0$, $f (i_{A_0, A} (x_0)) = g (i_{A_0, A} (x_0))$. Thus $U (f i_{A_0, A}) = U (g i_{A_0, A})$ and so $f i_{A_0, A} = g i_{A_0, A}$, by faithfulness of $U$.  This contradicts the construction of $f$ and $g$.
\end{proof}

\begin{thm}
  No faithful functor from $\Hilbr$ to $\Set$ preserves directed colimits. 
\end{thm}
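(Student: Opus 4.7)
The plan is to derive a contradiction by constructing a single element $x\in UH$, for $H$ a suitable infinite-dimensional Hilbert space, whose support must simultaneously equal a given one-dimensional line $\ell\subseteq H$ and lie inside a finite-dimensional subspace $H_n\subseteq H$ that meets $\ell$ only at $0$. The earlier lemmas already do almost all of the work: Lemma \ref{trivial-supp} will manufacture an $x$ with support $\ell$, while Lemma \ref{inter-lem} prohibits any element from having two finite-dimensional supports with trivial intersection. The only new ingredient is the observation that preservation of directed colimits supplies a \emph{second}, independent family of supports.

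Concretely, I would take $H = \ell^{2}$ with standard orthonormal basis $(e_k)_{k\ge 0}$ and set $H_n := \operatorname{span}(e_0,\dots,e_{n-1})$, so that $H$ is the directed colimit in $\Hilbr$ of the chain $H_0 \subseteq H_1 \subseteq \dots$ (namely the completion of the union). Pick a unit vector $v \in H$ all of whose coordinates with respect to $(e_k)$ are nonzero, and set $\ell := \mathbb{C} v$. Then $\ell \cap H_n = 0$ for every $n$, since no nonzero scalar multiple of $v$ lies in $H_n$. Applying Lemma \ref{trivial-supp} to $\ell \subseteq H$ produces $x_0\in U\ell$ for which $x := i_{\ell,H}(x_0)$ has nontrivial support in $H$; Remark \ref{supp-rmk} forces that support to lie inside $\ell$, and then by Lemma \ref{fd-lem-uq} together with the fact that $\ell$ is one-dimensional, the support of $x$ is exactly $\ell$.

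Here is where directed-colimit preservation pays off: since $U$ preserves the colimit $H = \operatorname{colim} H_n$, the set $UH$ is the directed colimit of the $UH_n$ in $\Set$, so every element of $UH$ is in the image of some canonical map $Ui_{H_n, H}\colon UH_n \to UH$. In particular, $x = i_{H_n, H}(x_n')$ for some $n$ and some $x_n' \in UH_n$, and a second application of Remark \ref{supp-rmk} shows that $x$ is supported on $H_n$ as well. Lemma \ref{inter-lem}, applied to the two supports $\ell$ and $H_n$, then forces $x$ to be supported on $\ell \cap H_n = 0$, contradicting that the support of $x$ is the nontrivial line $\ell$.

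I do not expect any genuine obstacle. The one step deserving a moment of care is extracting from the phrase ``$U$ preserves directed colimits'' the elementwise statement that every point of $UH$ comes from some $UH_n$; but this is immediate from the standard description of directed colimits in $\Set$ as unions of the images of the canonical maps. All remaining structural facts needed for the argument have been proved in the lemmas above.
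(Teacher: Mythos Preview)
Your argument is correct, and it is genuinely different from---and more elementary than---the paper's own proof. The paper proceeds by a cardinality count: it invokes the uniformization theorem from accessible category theory to bound $|UA|$ for a Hilbert space $A$ of carefully chosen dimension $\lambda$ (of countable cofinality, so that $\lambda^{\aleph_0}>\lambda$), and then observes that $A$ has more lines than $UA$ has elements, so some line cannot be the support of anything in $UA$; this contradicts Lemma~\ref{trivial-supp} just as in your final step. Your route avoids the uniformization theorem and the cardinal arithmetic entirely by exhibiting, inside the separable space $\ell^2$, a single explicit line $\ell$ that is transverse to every member of a cofinal chain $(H_n)$ of finite-dimensional subspaces; preservation of that particular directed colimit then forces any $x$ with support $\ell$ to also be supported on some $H_n$, and Lemma~\ref{inter-lem} finishes. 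What the paper's approach buys is a template that generalizes readily to other accessible categories (the counting argument is insensitive to the specific structure of Hilbert spaces), while your approach buys a self-contained proof using only $\ell^2$ and no external machinery beyond the lemmas already in the paper.
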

\begin{proof}
  Suppose for a contradiction that $U$ is a faithful functor from $\Hilbr$ to $\Set$ preserving directed colimits. By the uniformization theorem \cite[2.19]{adamek-rosicky} (and see \cite[4.3]{beke-rosicky}), there is a cardinal $\mu_0$ such that for all regular cardinals $\mu \ge \mu_0$, $U$ preserves $\mu$-presentable objects. Fix a cardinal $\lambda > \mu_0 + 2^{\aleph_0}$ of countable cofinality. Let $A$ be the Hilbert space of dimension $\lambda$, hence of cardinality $\lambda^{\aleph_0} > \lambda$. Note that $A$ is $\lambda^+$-presentable, so by definition of $\mu_0$ we also have that $U A$ is $\lambda^+$-presentable, hence has cardinality at most $\lambda$.

  Each nonzero element of $A$ spans a line (i.e.\ a one-dimensional subspace of $A$), and each line contains only $|\mathbb{C}| = 2^{\aleph_0}$-many elements. This implies that there are $\lambda^{\aleph_0}$-many distinct lines. Since $|U A| \le \lambda < \lambda^{\aleph_0}$, there must be a line $A_0$ that is \emph{not} the support of any $x \in U A$. However, for each $x_0 \in U A_0$, $i_{A_0, A} (x_0)$ is supported on $A_0$ (Remark \ref{supp-rmk}). By minimality of the support, the support of every element of $i_{A_0, A}[U A_0]$ must be a strict subspace of $A_0$, i.e.\ the zero space. In other words, every element of $i_{A_0, A}[U A_0]$ has trivial support. This contradicts Lemma \ref{trivial-supp}.  
\end{proof}

\begin{cor}\label{corhilbnoaecat} In light of Corollary~\ref{corfconcrax}, $\Hilbr$ is not an abstract elementary category.\end{cor}

To emphasize: Corollary~\ref{corhilbnoaecat} implies that the category of Hilbert spaces cannot be realized as an AEC, even up to arbitrary changes of signature.

\begin{remark}
Linear isometries are exactly the \emph{regular monomorphisms} in $\Hilb$ (see \cite[7.58(3)]{joy-of-cats}). Thus we let $\Hilb_r$ denote the subcategory of $\Hilb$ with the same objects, but with linear isometries as morphisms. We observe in passing that $\Hilb_r$ is an $\aleph_1$-accessible category with directed colimits. The proof of Lemma
\ref{inter-lem} fails for isometries.  We believe that $\Hilb_r$
is not finitely concrete but we do not currently have a proof.
\end{remark}

\section{$C^\ast$-algebras and other examples}\label{secrest}

We will now make use of the non-finite-concreteness of $\Hilbr$ to obtain nonaxiomatizability results for a number of other categories.  In each case, we make use of the well-behaved embeddings of $\Hilbr$ into the relevant category, together with Proposition~\ref{propfconcrtrans}.

\begin{example}\label{banach} \
\begin{enumerate}
  \item\label{banach-1} The category $\Met_m$ of complete metric spaces with injective contractions and the category $\Ban_m$ of Banach spaces with injective linear contractions, each admit faithful, directed-colimit-preserving embeddings of $\Hilbr$, and are therefore not finitely concrete.  They are thus nonaxiomatizable in precisely the same sense as $\Hilbr$.

  \item\label{banach-2}  The category $\Hilb$ of Hilbert spaces with linear contractions is not finitely concrete. Indeed, the inclusion $\Hilbr \to \Hilb$ is faithful and preserves directed colimits. This applies more generally, in fact: the same is true any time we have a non-finitely-concrete subcategory $\ck$ of a category $\ck^\ast$ that is closed under directed colimits (that is, where the inclusion preserves directed colimits). In particular, we also get that the category $\Met$ of complete metric spaces with contractions and the category $\Ban$ of Banach spaces with linear contractions are not finitely concrete, thus cannot be abstract elementary categories.
\end{enumerate}
\end{example}

Let $\CAlg$ be the category of commutative unital $C^\ast$-algebras and unit-preserving $\ast$-homomorphisms. 

\begin{thm}\label{cast-thm}
The category $\CAlg$ is not an abstract elementary category.
\end{thm}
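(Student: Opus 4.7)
The plan is to transfer non-finite-concreteness from $\Hilbr$ to $\CAlg$ using the principle noted just before Example~\ref{banach}: it suffices to exhibit a faithful, directed-colimit-preserving functor $F: \Hilbr \to \CAlg$. I would define $F$ via Gelfand duality, setting $F(H) := C(B_H)$, the commutative unital $C^\ast$-algebra of continuous $\mathbb{C}$-valued functions on the closed unit ball $B_H \subseteq H$ equipped with its weak topology (compact Hausdorff by Banach--Alaoglu, using $H \cong H^\ast$). A linear isometry $f: H \to K$ has adjoint $f^\ast: K \to H$, a weakly continuous contraction restricting to $f^\ast\rest B_K : B_K \to B_H$; put $F(f)(\phi) := \phi \circ (f^\ast\rest B_K)$. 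Functoriality is immediate from $(gf)^\ast = f^\ast g^\ast$, and $F$ is faithful because $f \mapsto f^\ast$ is injective on morphisms of $\Hilbr$ (since $f = f^{\ast\ast}$) and distinct continuous maps between compact Hausdorff spaces induce distinct $\ast$-homomorphisms under $C(-)$.

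The crux is that $F$ preserves directed colimits. Via Gelfand duality this reduces to checking that $H \mapsto B_H$ sends a directed colimit $H = \varinjlim_i H_i$ in $\Hilbr$ to the inverse limit $\varprojlim_i B_{H_i}$ in the category of compact Hausdorff spaces along the dual maps $f_{ij}^\ast: B_{H_j} \to B_{H_i}$. On underlying sets, a compatible family $(x_i)$ with $\|x_i\| \le 1$ corresponds (using the Riesz identification $H_i \cong H_i^\ast$) to a compatible family of norm-$\le 1$ functionals $\phi_i = \langle \cdot , x_i \rangle$ on the $H_i$; these assemble into a single norm-$\le 1$ functional on the dense subspace $\bigcup_i f_i(H_i) \subseteq H$, which extends uniquely to a point of $B_H$. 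The weak topology on $B_H$ then coincides with the subspace topology inherited from $\prod_i B_{H_i}$, since on a norm-bounded set, pointwise convergence on a norm-dense set of test vectors is equivalent to pointwise convergence on all of $H$.

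I expect this final topological identification to require the most care, though it is essentially a standard fact about weak topologies restricted to bounded sets; the remainder of the argument merely packages Gelfand duality and Banach--Alaoglu. I observe in passing that the same construction, with $B_H$ replaced by the weak-$\ast$ unit ball $B_{X^\ast}$, would yield a faithful directed-colimit-preserving functor $\Ban_r \to \CAlg$, providing an alternative route via Example~\ref{banach}(\ref{banach-1}).
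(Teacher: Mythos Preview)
Your proposal is correct, but takes a genuinely different route from the paper's.

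The paper works with the forgetful functor $V:\CAlg\to\Ban$ (into all of $\Ban$, not $\Ban_r$) and invokes its left adjoint $F:\Ban\to\CAlg$. Colimit preservation then comes for free, and faithfulness of $F$ follows abstractly from the fact that the unit $\eta_B:B\to VFB$ is a monomorphism---which in turn is deduced from the isometric embedding $B\hookrightarrow C(B_{B^\ast})$. Non-finite-concreteness is then transferred from $\Ban$ (Example~\ref{banach}(\ref{banach-2})) rather than from $\Hilbr$ directly.

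Your approach builds the transfer functor explicitly as $H\mapsto C(B_H)$ and verifies the required properties by hand. The key step---identifying $B_H$ with $\varprojlim B_{H_i}$ in compact Hausdorff spaces---is correct as you outline it: the canonical comparison map $B_H\to\varprojlim B_{H_i}$ is a continuous bijection between compact Hausdorff spaces, hence a homeomorphism, so the topological verification you flag as delicate is in fact automatic once the set-level bijection is established. Your closing remark is apt: the $\Ban_r$ version via $X\mapsto C(B_{X^\ast})$ is arguably cleaner, since the transpose map is available without Riesz representation, and this is essentially the same embedding the paper uses to show the unit is monic.

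What each approach buys: the paper's argument is shorter and requires no computation of colimits, at the cost of importing the existence of the left adjoint (via folklore or the adjoint functor theorem together with local presentability of both categories). Your argument is self-contained and elementary, relying only on Gelfand duality and the standard density/compactness facts, but you must carry out the inverse-limit identification explicitly.
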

\begin{proof}
Let $V:\CAlg\to\Ban$ be the forgetful functor (recall that $\ast$-homomorphisms are, in particular, contractions). It is folklore (see for example \cite[\S12]{pestuniv}) that $V$ has a left adjoint $F:\Ban\to\CAlg$. We note that this also follows from the adjoint functor theorem for locally presentable categories (\cite[1.66]{adamek-rosicky}) because $V$ preserves limits and $\aleph_1$-directed colimits and both $\Ban$ and $\CAlg$ are locally presentable (see, respectively, \cite[1.48]{adamek-rosicky}, and \cite[3.28]{adamek-rosicky}---in the second case we need the result of Isbell \cite{isbell} that $\CAlg$ is a variety of algebras with $\aleph_0$-ary operations).
  
Moreover, the components of the unit of the adjunction, $\eta_B:B\to VFB$, are linear isometries hence, in particular, monomorphisms. This follows from the fact that any Banach space $B$ can be isometrically embedded into a commutative unital $C^\ast$-algebra. Indeed, this algebra can be taken to be the $C^\ast$-algebra $C(X)$ of continuous complex-valued functions on the closed unit ball $X$ of the dual space $B^\ast$ with the weak* topology. Since $X$ is compact (by the Banach-Alaoglu theorem), $C(X)$ is commutative and unital. 

The functor $F$ is faithful by a general result for adjoints (see \cite[19.14(1)]{joy-of-cats}). Moreover, since $F$ is a left adjoint, it preserves arbitrary colimits.  By Example \ref{banach}(\ref{banach-2}), then, $\CAlg$ is not finitely concrete.
\end{proof}

\begin{fact}
	The category $\CAlg$ is not an abstract elementary category.
\end{fact}

As $\khdual$, the dual of the category of compact Hausdorff spaces and continuous maps, is equivalent to $\CAlg$, we have, immediately:

\begin{cor}\label{khcor}
	The category $\khdual$ is not an abstract elementary category.
\end{cor}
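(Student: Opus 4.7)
The plan is to transport the non-elementarity of $\CAlg$, just established, across Gelfand--Naimark duality. The classical theorem of Gelfand and Naimark provides a contravariant equivalence between the category of compact Hausdorff spaces with continuous maps and $\CAlg$, sending a compact Hausdorff space $X$ to the algebra $C(X)$ of continuous complex-valued functions on $X$, and recovering $X$ from an abstract commutative unital $C^\ast$-algebra as its Gelfand spectrum. Equivalently, this yields an equivalence of categories $\khdual \simeq \CAlg$.

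I would then invoke two general observations. First, being elementary---i.e.\ being equivalent to $\Mod(T)$ for some first-order theory $T$---is invariant under equivalence of categories, since equivalence of categories is transitive. Second, as noted in the paragraph preceding the corollary, every elementary category is finitely concrete, because the forgetful functor $\Mod(T)\to\Set$ is faithful and preserves directed colimits by Richter's theorem (cf.\ \cite[5.23]{adamek-rosicky}). If $\khdual$ were elementary, then by the equivalence $\khdual \simeq \CAlg$ so would be $\CAlg$, and hence $\CAlg$ would be finitely concrete, contradicting Theorem \ref{cast-thm}. There is no real obstacle here: all of the mathematical content is already in Theorem \ref{cast-thm}, and the corollary is merely the translation of that statement through a well-known duality.
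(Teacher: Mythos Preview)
Your argument is correct and matches the paper's own reasoning essentially verbatim: the paper too derives the corollary by combining the Gelfand--Naimark equivalence $\khdual \simeq \CAlg$ with the observation that elementary categories are finitely concrete, so that Theorem~\ref{cast-thm} forbids $\CAlg$ (and hence $\khdual$) from being elementary.
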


As noted in the introduction, this strengthens existing results of \cite{banaschewski} and \cite{rosicky-elementary}---that $\khdual$ is not $P$-elementary.  As noted in \cite[23]{abreg}, it follows immediately that the analogous result holds for $\khorddual$, the dual of the category of compact \emph{ordered} spaces, as well. 

\bibliographystyle{amsalpha}
\bibliography{hilb-mono}
\end{document}